\numberwithin{equation}{section}
\newtheorem{theorem}{Theorem}[section]
\newtheorem{lemma}[theorem]{Lemma}
\newtheorem{proposition}[theorem]{Proposition}
\theoremstyle{definition}
\newcommand{\R}{{\mathbb R}}
\begin{document}

\title[]{Banach lattice-valued $q$-variation and  convexity}

\author{Guixiang Hong}
\address{Instituto de Ciencias Matem\'aticas,
CSIC-UAM-UC3M-UCM, Consejo Superior de Investigaciones
Cient\'ificas, C/Nicol\'as Cabrera 13-15. \newline 28049, Madrid. Spain.\\
\emph{E-mail address: guixiang.hong@icmat.es}}
\thanks{The author is supported by MINECO: ICMAT Severo Ochoa project SEV-2011-0087 and ERC Grant StG-256997-CZOSQP (EU)}

\thanks{\small {{\it MR(2000) Subject Classification}.} Primary
42B25, 46B20; Secondary 46B99.}
\thanks{\small {\it Keywords.}
Variational inequalities, Hardy-Littlewood property, Behaviour in $L^\infty(\R)$.}

\maketitle

\begin{abstract}
In this paper, we show that the $q$-variation for differential operator is not bounded in $L^p(\mathbb{R};L^{\infty}(\mathbb{R}))$ for any $1<p<\infty$. As a consequence, the $q$-variation operator can not be used to characterize the Hardy-Littlewood property of the underlying Banach lattice. Moreover, for K\"othe function spaces $X$ with $X^*$ norming such that $X$ is $r$-convex for some large $r$, and $X$ is not $s$-convex for any $s$, $r<s<\infty$, we obtain lower bounds of the $(L^p(\mathbb{R};X),L^p(\mathbb{R};X)$-bounds of the $q$-variation operator, which tends to $\infty$, as $r$ tends to $\infty$. 
\end{abstract}

\section{Introduction}
In recent years, many research papers in probability, ergodic theory and harmonic analysis (see e.g. the references appearing in the Introduction of \cite{HoMa}) have been devoted to the study of the boundedness of the $q$-variation operators, with $2<q<\infty$, acting on scalar-valued functions. The $q$-variation operators can be viewed as `better' operators than the maximal operators in the sense that they immediately imply the pointwise convergence of the underlying family of operators without
using the Banach principle via the corresponding maximal inequality, and they can be used to measure the speed of convergence of the family. Very recently, vector-valued $q$-variations for differential operators and semigroups, i.e. $q$-variation operators acting on vector-valued functions, have also been considered  in \cite{HoMa} \cite{HoMa2}.

Let $X$ be a Banach lattice. Given a locally integrable function $f:\mathbb{R}\rightarrow X$, for any $t>0$, the differential averages $A_t$ is defined as
$$A_tf(x)=\frac{1}{t}\int^t_{-t}f(x-y)dy.$$
Let $J$ be a finite subset of $\mathbb{R}_+$, the Hardy-Littlewood operator $\mathcal{M}_J$ on $X$-valued functions $f$ is defined as
$$\mathcal{M}_Jf(x)=\sup_{t\in J}|A_tf(x)|,$$
where the sup is a sup in the lattice $X$. This accounts for the need to take a finite collection of radii $J$. When $X$ is a K\"othe function space, then we can take the sup on $\mathbb{R}_+$. 
We refer the readers to \cite{LiTz79} for more information on Banach lattices and Banach function spaces.
The $q$-variation operator $\mathcal{V}_{q,J}(A)$ on $X$-valued functions $f$ is defined as
$$\mathcal{V}_{q,J}(A)f(x)=\sup_{\{t_i\}\subset J}\big(\sum_i|A_{t_i}f(x)-A_{t_{i+1}}f(x)|^q\big)^{1/q}$$
where the supremum is taken over all decreasing sequences $\{t_i\}$ in $J$. It is obvious that the $q$-variation operator is bigger than the maximal operator, hence if $X$ is a Banach lattice such that $\mathcal{V}_{q,J}(A)$ is bounded in $L^p(\mathbb{R};X)$, then so does $\mathcal{M}_J$.
It is a natural question that whether the reverse implication could remain true?  

In this paper, we give a negative answer. Precisely, we will show the following result.

\begin{theorem}\label{main theorem}
Let $2<q<\infty$, then $\mathcal{V}_{q,J}(A)$ is not uniformly (w.r.t. $J$'s) bounded in $L^p(\mathbb{R}; L^{\infty}(\mathbb{R}))$ for any $1<p<\infty$.
\end{theorem}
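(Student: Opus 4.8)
The plan is to exploit the fact that the norm of $L^p(\mathbb R;L^\infty(\mathbb R))$ records the worst \emph{pointwise} behaviour of the inner variable. Writing an $L^\infty(\mathbb R)$-valued function as $f(x)=F(x,\cdot)$ and using that, for a finite set $J$, the lattice supremum defining $\mathcal V_{q,J}(A)$ is just a finite pointwise maximum, one has
\[
\big\|\mathcal V_{q,J}(A)f(x)\big\|_{L^\infty}\ =\ \operatorname*{ess\,sup}_{a}\ V_q\big((A_tF(x,a))_{t\in J}\big),
\]
where $V_q$ on the right is the ordinary scalar $q$-variation of a finite sequence. Hence it suffices to find, for every $N$, a finite $J_N\subset\mathbb R_+$ and an $f$ with $\|f\|_{L^p(\mathbb R;L^\infty)}$ bounded independently of $N$, such that for all $x$ in a set of fixed positive measure there is an $a=a(x)$ with $V_q((A_tF(x,a))_{t\in J_N})\gtrsim N^{1/q}$. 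I would obtain this by a translation trick: take a fixed bounded scalar $g_0$ whose lacunary averages at the single point $0$ oscillate, and put a translated copy $g_0(\cdot-a)$ in the slot $a$; since the averaging $A_t$ acts only in $x$, it commutes with the translation and the ``bad point'' of slot $a$ lands at $x=a$.

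Concretely, fix $t_k=4^{-k}$, set $A_k=\{y:\ t_k\le|y|<t_{k-1}\}$ with $t_0=1$, and let $g_0=\sum_{k\ge1}(-1)^k\mathbf 1_{A_k}$; this is supported in $[-1,1]$ with $\|g_0\|_{L^\infty}=1$. A one-line computation of the geometric sum gives $A_{t_m}g_0(0)=c\,(-1)^{m+1}$ for an absolute constant $c>0$ and every $m\ge1$: the contribution to the average at $0$ from the smaller annuli $A_{m+1},A_{m+2},\dots$ is a convergent geometric series that only changes the constant, because the ratio $t_k/t_{k-1}=\tfrac14$ is small enough that the annulus $A_{m+1}$ (which the ball of radius $t_m$ first fully contains) dominates. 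Consequently, running over the decreasing chain $t_1>t_2>\dots>t_N$ inside $J_N:=\{t_1,\dots,t_N\}$,
\[
V_q\big((A_tg_0(0))_{t\in J_N}\big)\ \ge\ \Big(\sum_{m=1}^{N-1}\big|A_{t_m}g_0(0)-A_{t_{m+1}}g_0(0)\big|^q\Big)^{1/q}\ =\ 2c\,(N-1)^{1/q}.
\]
This lacunary estimate --- specifically, keeping the consecutive gaps bounded below --- is the only spot that needs a little care, and it is precisely what forces the ratio $1/4$ (any sufficiently small fixed ratio works).

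Finally I would define $f:\mathbb R\to L^\infty(\mathbb R)$ by $f(x)(a)=g_0(x-a)\,\mathbf 1_{[0,1]}(a)$. Then $\|f(x)\|_{L^\infty}\le\|g_0\|_{L^\infty}\,\mathbf 1_{[-1,2]}(x)$, so $\|f\|_{L^p(\mathbb R;L^\infty)}\le3^{1/p}$ for all $N$. Since averaging acts in $x$ only, $A_tf(x)(a)=(A_tg_0)(x-a)\,\mathbf 1_{[0,1]}(a)$; as $A_tg_0$ is continuous we may take this continuous representative, and for $x\in[0,1]$ the slot $a=x$ gives $A_tf(x)(x)=A_tg_0(0)$. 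Therefore
\[
\big\|\mathcal V_{q,J_N}(A)f(x)\big\|_{L^\infty}\ \ge\ V_q\big((A_tg_0(0))_{t\in J_N}\big)\ \ge\ 2c\,(N-1)^{1/q}\qquad(x\in[0,1]),
\]
so $\|\mathcal V_{q,J_N}(A)f\|_{L^p(\mathbb R;L^\infty)}\ge 2c\,(N-1)^{1/q}$ while $\|f\|_{L^p(\mathbb R;L^\infty)}\le3^{1/p}$; letting $N\to\infty$ gives $\sup_J\|\mathcal V_{q,J}(A)\|_{L^p(\mathbb R;L^\infty)\to L^p(\mathbb R;L^\infty)}=\infty$. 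The same $f$ works for every $1<p<\infty$ and the argument never uses $q>2$, so that hypothesis is inessential here; the genuine point is that $L^\infty$ is large enough to hold, simultaneously over all slots $a$, the pointwise oscillations of the translates $g_0(\cdot-a)$. I expect the only real subtlety to be bookkeeping about the definition of $L^p(\mathbb R;L^\infty(\mathbb R))$ (since $L^\infty$ is non-separable), which is immediate here because $f$ is jointly measurable, bounded and compactly supported.
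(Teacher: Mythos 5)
Your proposal is correct, and it proves Theorem \ref{main theorem} by a more elementary, self-contained route than the paper. The paper first passes from the averages $A_t$ to the heat semigroup via the pointwise domination in Lemma \ref{reduction} ($C_X(H)\leq CC_X(A)$), and then borrows from \cite{BCT11} the oscillation estimate of Lemma \ref{lem: key estimate} for $|H_{a^{-2j}}G(0)-H_{a^{-2(j+1)}}G(0)|$, where $G$ is a lacunary alternating function; the remainder of its argument is exactly your translation trick, namely $\tilde G(x,y)=G(x-y)\chi_{[-1,1]}(y)$, the bound $3^{1/p}$, and a continuity argument (estimate \eqref{y control 0}) to pass from the single point $y=0$ to nearby $y$. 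You bypass both the reduction lemma and the cited heat-kernel estimate by noticing that for the averages themselves the oscillation at the center of a lacunary $\pm1$ function can be computed exactly: with ratio $1/4$ one gets $A_{t_m}g_0(0)=c(-1)^{m+1}$ from a geometric series, so consecutive differences equal $2c$ and the variation along $N$ radii is $2c(N-1)^{1/q}$. The one genuinely delicate point in your argument, evaluating the essential supremum in the slot variable at the measure-zero set $\{a=x\}$, is handled correctly by taking the continuous representative of $A_tg_0$; this plays precisely the role of the paper's continuity step near $y=0$, and the measurability convention you invoke is the same one the paper uses for K\"othe-space-valued functions. What the paper's detour buys is the statement for the heat semigroup itself and the reduction inequality, both of which are reused in Section 5 to prove Proposition \ref{pro:CrA}; what your route buys is a shorter proof of Theorem \ref{main theorem} that needs no external input, and it makes transparent that the hypothesis $q>2$ is irrelevant for this negative result.
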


This result provides a negative answer of the above question, since it is well-known that $\mathcal{M}_J$'s are uniformly bounded on $L^p(\mathbb{R};L^{\infty}(\mathbb{R}))$. The idea of the proof is to construct $L^\infty(\mathbb{R})$-valued function such that we can compute the lower bound, which is partially inspired by the arguments for the behavior of $q$-variation for the heat semigroup in $L^{\infty}(\mathbb{R})$ developed in \cite{BCT11} by Betancor {\it et al}.

The following result gives a quantitative description of Theorem \ref{main theorem} in the Banach function space level.
Given a K\"othe function space $X$. Then we can define the following quantity for fixed $1<p<\infty$ and $2<q<\infty$,
\begin{align}\label{CrA}
C_X(A)=\sup_{f\in L^p(\mathbb{R};X)}\sup_J\frac{\|\mathcal{V}_{q,J}(A)f\|_{L^p(X)}}{\|f\|_{{L^p(X)}}},
\end{align}
which may equals infinity if $\mathcal{V}_{q,J}$'s are not uniformly bounded on $L^p(\mathbb{R};X)$. However by the result in \cite{HoMa2} that when $X$ is a UMD lattice, $\mathcal{V}_{q,J}$'s are indeed uniformly bounded on $L^p(\mathbb{R};X)$,  $C_X(A)$ is finite for UMD lattice $X$.

The result is stated as follows.
\begin{theorem}\label{main theorem 2}
Let $1<p<\infty$ and $2<q<\infty$. Let $X$ be a K\"othe function space with $X^*$ norming such that $X$ is $r$-convex for some large $r$, and $X$ is not $s$-convex for any $s$, $r<s<\infty$.  Then there exist a constant $C>0$ such that $C_{X}(A)\geq Cr^{1/q}/c_r$, where $c_r$ is the constant of $r$-convexity of $X$.

In particular, we have $C_{L^r(\mathbb{R})}(A)\geq Cr^{1/q}$ which tends to infinity as $r$ tends to infinity.
\end{theorem}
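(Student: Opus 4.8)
The plan is to deduce Theorem~\ref{main theorem 2} from a quantitative rerun of the construction behind Theorem~\ref{main theorem}, carried out inside a finite-dimensional sublattice of $X$ that is, up to the constant $c_r$, a copy of $\ell^\infty_N$ with $N$ of the order of $2^r$. The starting point is the elementary observation underlying the proof of Theorem~\ref{main theorem}: if $e_1,\dots,e_N$ are pairwise disjoint norm-one elements of a K\"othe function space $Y$ and $f=\sum_k f_k\,e_k$ with scalar functions $f_k$, then, since $A_t$ acts only on the $\mathbb{R}$-variable and lattice absolute values, $q$-th powers and finite lattice suprema are computed coordinatewise, one has $\mathcal V_{q,J}(A)f=\sum_k\big(\mathcal V^{\mathrm{sc}}_{q,J}(A)f_k\big)e_k$, where $\mathcal V^{\mathrm{sc}}_{q,J}(A)$ denotes the scalar $q$-variation operator. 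Hence
\begin{align}\label{reduction}
\|\mathcal V_{q,J}(A)f\|_{L^p(Y)}=\Big\|\,\big\|\sum_k\big(\mathcal V^{\mathrm{sc}}_{q,J}(A)f_k\big)(x)\,e_k\big\|_{Y}\,\Big\|_{L^p_x}.
\end{align}

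Next I would extract the sublattice. Because $X$ is $r$-convex, every disjoint norm-one family obeys the upper estimate $\big\|\sum_{k\in S}e_k\big\|_X\le c_r|S|^{1/r}$. Because $X$ is not $s$-convex for any $s\in(r,\infty)$, the structure theory of K\"othe function spaces (this is where $X^*$ norming enters, allowing one to argue in $X$ rather than merely in $X^{**}$; see \cite{LiTz79}) furnishes, for every $N$, a disjoint norm-one family $e_1,\dots,e_N$ in $X$ with a matching lower estimate $\big\|\sum_{k\in S}e_k\big\|_X\ge c_0|S|^{1/r}$ for an absolute $c_0>0$. Taking $N\asymp 2^r$ (more generally $N\asymp A^r$ for any fixed $A>1$), both inequalities force $c_0\lesssim\big\|\sum_{k\in S}e_k\big\|_X\lesssim c_r$ for all $S\subseteq\{1,\dots,N\}$, so the span of the $e_k$ is, uniformly, a $c_r$-copy of $\ell^\infty_N$.

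Then I would transplant the test function of Theorem~\ref{main theorem}. That construction, inspired by \cite{BCT11}, produces for each $N$ an $\ell^\infty_N$-valued function $f_0$ on $\mathbb{R}$ and a finite set $J$ using $\asymp\log N$ distinct radii, arranged so that on a set of $x$ of controlled measure the scalar $q$-variations $\mathcal V^{\mathrm{sc}}_{q,J}(A)f_{0,k}(x)$ are all of comparable size while the $f_{0,k}$ are essentially disjointly supported in $x$, yielding a blow-up of order $(\log N)^{1/q}$ in $L^p(\mathbb{R};L^\infty(\mathbb{R}))$. Feeding this $f_0$ into \eqref{reduction} with the $e_k$ above in place of the coordinate vectors of $\ell^\infty_N$, I would bound the numerator from below by $c_0$ times the relevant $\ell^r$-sum and the denominator from above, via $r$-convexity, by $c_r$ times the same $\ell^r$-sum; the two $|S|^{1/r}$ factors cancel, leaving a factor $c_0/c_r$ times the $L^\infty$ blow-up. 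With $N\asymp 2^r$ this gives
\begin{align}\label{conclusion}
C_X(A)\ \ge\ \frac{\|\mathcal V_{q,J}(A)f_0\|_{L^p(X)}}{\|f_0\|_{L^p(X)}}\ \gtrsim\ \frac{(\log N)^{1/q}}{c_r}\ \asymp\ \frac{r^{1/q}}{c_r},
\end{align}
and for $X=L^r(\mathbb{R})$ one has $c_0=c_r=1$, whence $C_{L^r(\mathbb{R})}(A)\gtrsim r^{1/q}\to\infty$.

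The step I expect to be the main obstacle is producing the disjoint family with an \emph{absolute} lower $\ell^r$-constant $c_0$, independent of $N$, $r$ and $X$: a naive appeal to the failure of $s$-convexity for $s\downarrow r$ yields a constant carrying an $\varepsilon$-loss that could degrade as $N\to\infty$, and one must instead invoke Krivine-type results relating lower/upper $p$-estimates to convexity, together with a choice of $N$ as a function of $r$ that absorbs these losses. A secondary technical point is to revisit the proof of Theorem~\ref{main theorem} keeping explicit track of the dependence of the blow-up on the number of radii, so as to certify the exponent $1/q$ and that the governing parameter is $\log N$, hence $r$, rather than $N$ itself.
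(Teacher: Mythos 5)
Your overall strategy is sound and lands on the right bound, and it is close in spirit to the paper's argument, but organized differently: the paper first proves the scalar-lattice estimate $C_{L^r(\mathbb{R})}(A)\geq Cr^{1/q}$ (Proposition \ref{pro:CrA}), then discretizes $L^r(\mathbb{R})$ into $\ell^r_m$ by a simple-function argument, and finally transfers $\ell^r_m$ into $X$ via Lemma \ref{lem:finite representation of X} (Proposition 3.11 of \cite{HMST95}); you instead push an $\ell^\infty_N$-discretization of the Theorem \ref{main theorem} construction, with $N$ of order $a^{r}$, directly onto a disjoint family in $X$, using $r$-convexity only for the denominator. Two remarks. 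First, what you call the main obstacle --- a disjoint family with a lower $r$-estimate whose constant does not degrade with $N$ --- is exactly Lemma \ref{lem:finite representation of X}: its lower constant $(1-\varepsilon)^{1/r}$ is independent of $m$, and this is precisely where the hypotheses ``$X^*$ norming'' and ``not $s$-convex for $s>r$'' are used; in fact, in your $\ell^\infty_N$ formulation you do not need it at all, since disjointness and normalization already give $\|\sum_k b_k e_k\|_X\geq \max_k|b_k|$, so only the $r$-convexity upper estimate is genuinely required. Second, the step you relegate to a ``secondary technical point'' --- certifying that the blow-up is $(j_1-j_0)^{1/q}$ with the number of radii as the governing parameter, while the price of localizing in the auxiliary variable is only $(\delta a^{-(j_1+1)})^{1/r}$, i.e. $N^{1/r}$ after discretizing into $N\approx a^{j_1}$ translates --- is the analytic heart of the theorem: it is exactly the content of the proof of Proposition \ref{pro:CrA}, which keeps the reduction of Lemma \ref{reduction}, the estimate of Lemma \ref{lem: key estimate} on the window $|y|<\delta a^{-(j_1+1)}$, and the choice $j_1=[r]j_0$ making $a^{-j_1/r}$ bounded while $(j_1-j_0)^{1/q}\approx r^{1/q}$. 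As written, your proposal asserts this quantitative lemma rather than proving it, so it is a correct architecture with the key computation deferred; once that computation is carried out (as in Section 5 of the paper), your route does give $C_X(A)\geq Cr^{1/q}/c_r$, and $C_{L^r(\mathbb{R})}(A)\geq Cr^{1/q}$ since $c_r=1$ there.
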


Note that if $q=\infty$, the variation operator reduces to the maximal operator, and Theorem \ref{main theorem 2} gives no new information.  The formulation and the proof of this result is partially motivated by the work \cite{HMST95} by Harboure {\it et al}.

Recall that a Banach lattice $X$ is said to have the Hardy-Littlewood (H.L.) property if there exists some $1<p<\infty$ such that the operators $\mathcal{M}_J$ are uniformly bounded in $L^p(\mathbb{R}; X)$. See \cite{BFR12} and the references therein for more information on this property. Theorem \ref{main theorem} imlies that the H.L. property can not ensure the uniformly $L^p(X)$-boundedness of $q$-variation. While  the UMD property of Banach lattice $X$ is sufficient for the uniformly $L^p(X)$-boundedness of $q$-variation with $2<q<\infty$ from the result in \cite{HoMa} mentioned previously. It is interesting to know whether the UMD property of $X$ is also necessary for the uniformly $L^p(X)$-boundedness of $q$-variation for all $2<q<\infty$. Until the moment of writing the paper, we have no idea about this.

We will show Theorem \ref{main theorem} in Section 3. Theorem \ref{main theorem 2} will be proved in Section 4. One intermediate step will be shown in Section 2.

Throughout this paper, by $C$ we always denote a positive constant that may vary from line to line.

\section{Reduction}
In this section, we will reduce the statements in Theorem \ref{main theorem} and \ref{main theorem 2}  for  the $q$-variation associated to the differential averages to similar statements for the one associated to the heat semigroup  on the real line.

Let $\{e^{-t\Delta}\}_{t>0}$ be the heat semigroup on $\mathbb{R}$ and $\{H_t\}_{t>0}$  be the associated kernels. It is well-known that $H_t(x)=1/\sqrt{t}H(x/\sqrt{t}),\;\mathrm{with}\;H(x)=1/\sqrt{4\pi}e^{-|x|^2/4}.$ Let $H=\{H_t\}_{t>0}$. Denote the corresponding $q$-variation by $\mathcal{V}_{q,J}(H)$.

In this section, $X$ is assumed to be a K\"othe function space on a measure space $(\Omega,\nu)$. Thus any $X$-valued measurable function on $\mathbb{R}$ can be viewed as a  measurable function on $\mathbb{R}\times X$. Then similar to the definition of $C_X(A)$, for fixed $1<p<\infty$ and $2<q<\infty$, we can define $C_X(H)$.

The main result of this section is formulated as follows, which is a variant of Lemma 2.4 in \cite{CJRW03}.
\begin{lemma}\label{reduction}
Let $X$ be a K\"othe function space on a measure space $(\Omega,\nu)$ having Fatou property. Then
there exists a positive constant $C$ such that $C_X(H)\leq CC_X(A)$.
\end{lemma}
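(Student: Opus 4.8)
The plan is to establish a \emph{subordination formula} expressing the heat semigroup as a positive superposition of the differential averages, and then to push the $q$-variation estimate through this formula by two applications of Minkowski's inequality — one in $\ell^q$ and one in $L^p(\mathbb{R};X)$.

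First I would record the subordination identity. Since $H(x)=\tfrac{1}{\sqrt{4\pi}}e^{-|x|^2/4}$ is radially decreasing with $\int_{\mathbb R}H=1$, set $c(v):=-vH'(v)=\tfrac{v^2}{2\sqrt{4\pi}}e^{-v^2/4}\ge 0$ and $k_v(y):=\tfrac1v\mathbf{1}_{[-v,v]}(y)$, so that $A_vg=k_v*g$. Integrating $-H'$ from $|x|$ to $\infty$ gives $H=\int_0^\infty c(v)\,k_v\,dv$ as an identity of functions on $\mathbb{R}$; feeding in the scaling $H_t(x)=t^{-1/2}H(x/\sqrt t)$ and changing variables then yields
\[
  H_t=\int_0^\infty c(v)\,k_{\sqrt t\,v}\,dv,\qquad \int_0^\infty c(v)\,dv=\int_0^\infty H(v)\,dv=\tfrac12 .
\]
Convolving with $f$ and applying Fubini slicewise — i.e.\ for $\nu$-a.e.\ $\omega\in\Omega$ one works with the scalar function $f(\cdot,\omega)\in L^1_{\mathrm{loc}}(\mathbb R)$ (which holds for a dense class of $f$, and this suffices), the interchange of integrals being justified for a.e.\ $x$ by the domination $\int_0^\infty c(v)A_{\sqrt tv}|f|(x,\omega)\,dv\le C\sup_{s>0}A_s|f|(x,\omega)<\infty$ — gives, a.e.\ on $\mathbb R\times\Omega$,
\[
  H_tf=\int_0^\infty c(v)\,A_{\sqrt t\,v}f\,dv .
\]

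Next I would transfer the estimate. Fix a finite $J\subset\mathbb R_+$, a decreasing sequence $\{t_i\}\subset J$, and write $J_v:=\{\sqrt t\,v:t\in J\}$. The identity above gives $H_{t_i}f-H_{t_{i+1}}f=\int_0^\infty c(v)\big(A_{\sqrt{t_i}v}f-A_{\sqrt{t_{i+1}}v}f\big)\,dv$; since $\{\sqrt{t_i}\,v\}_i$ is a decreasing sequence in $J_v$, Minkowski's inequality for the $\ell^q$-norm gives, pointwise on $\mathbb R\times\Omega$,
\[
  \Big(\sum_i\big|H_{t_i}f-H_{t_{i+1}}f\big|^q\Big)^{1/q}\le\int_0^\infty c(v)\Big(\sum_i\big|A_{\sqrt{t_i}v}f-A_{\sqrt{t_{i+1}}v}f\big|^q\Big)^{1/q}dv\le\int_0^\infty c(v)\,\mathcal{V}_{q,J_v}(A)f\,dv .
\]
The right-hand side being independent of the chosen sequence, taking the supremum over the finitely many decreasing sequences in $J$ produces $\mathcal{V}_{q,J}(H)f\le\int_0^\infty c(v)\,\mathcal{V}_{q,J_v}(A)f\,dv$ in the lattice $X$, pointwise in $x$. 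Taking $L^p(\mathbb{R};X)$-norms and invoking Minkowski's integral inequality in $L^p(\mathbb{R};X)$ — here is exactly where the Fatou property of $X$ enters, to guarantee that the superposition $\int_0^\infty c(v)\,\mathcal{V}_{q,J_v}(A)f\,dv$ again defines an element of $L^p(\mathbb R;X)$ with norm at most $\int_0^\infty c(v)\,\|\mathcal{V}_{q,J_v}(A)f\|_{L^p(X)}\,dv$ — together with $\|\mathcal{V}_{q,J_v}(A)f\|_{L^p(X)}\le C_X(A)\,\|f\|_{L^p(X)}$ for every $v>0$ and every finite $J_v\subset\mathbb R_+$, one gets
\[
  \|\mathcal{V}_{q,J}(H)f\|_{L^p(X)}\le\int_0^\infty c(v)\,\|\mathcal{V}_{q,J_v}(A)f\|_{L^p(X)}\,dv\le C_X(A)\,\|f\|_{L^p(X)}\int_0^\infty c(v)\,dv=\tfrac12\,C_X(A)\,\|f\|_{L^p(X)},
\]
and taking the supremum over $f$ and $J$ yields $C_X(H)\le\tfrac12\,C_X(A)$, as claimed.

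The genuine content lies in the bookkeeping of the first two steps: once the subordination identity is available the variational bound is immediate, so the hard part will be the measurability and integrability verifications that make the two Minkowski inequalities and the slicewise Fubini argument rigorous — in particular the joint measurability of $(x,v,\omega)\mapsto\mathcal{V}_{q,J_v}(A)f(x,\omega)$ and the verification, via the Fatou property, that its superposition against $c(v)\,dv$ lands back in $L^p(\mathbb{R};X)$. A preliminary reduction to $f\in L^p(\mathbb{R};X)\cap L^\infty$ with compact support, where all the integrals converge absolutely, followed by a density/Fatou argument, should dispatch these points cleanly.
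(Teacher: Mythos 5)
Your proof is correct and follows essentially the same route as the paper: the identical subordination identity $H_t=\int_0^\infty(-vH'(v))\,k_{\sqrt t\,v}\,dv$ (the paper writes it as $H_s(x)=-\int_0^\infty A_{t\sqrt s}(x)h'(t)\,t\,dt$), followed by Minkowski in $\ell^q$ to get the pointwise bound by a superposition of variations over the dilated finite sets, and then $L^p(X)$-norms. The only cosmetic difference is at the last step, where the paper bounds pointwise by $\sup_{J'}\mathcal{V}_{q,J'}(A)f$ times $\int_0^\infty|h'(t)|t\,dt$ (using the Fatou property to pass to norms), while you keep the $v$-dependence and apply Minkowski's integral inequality in $L^p(\mathbb{R};X)$; both give the same conclusion, and your explicit constant $\tfrac12$ is a harmless bonus.
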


\begin{proof}
Set $h(|x|)=H(x)$, then $h(t)$ is trivially differentiable and
$$\int^{\infty}_0|h'(t)|tdt=C<\infty.$$
Note that
$$h(s)=-\int^{\infty}_sh'(t)dt=-\int^{\infty}_0\chi_{[s,\infty)}(t)h'(t)dt.$$
Hence we have
$$H(x)=-\int^{\infty}_0\chi_{[s,\infty)}(|x|)h'(t)dt.$$

Fix a $X$-valued function $f$ on $\mathbb{R}$, $A_tf$ can be viewed as function on $\mathbb{R}\times \Omega$. Let $A_t(x)=1/t\chi_{[-t,t]}(x)$, $A_tf$ can be rewritten as
$$A_tf(x,\omega)=A_t\ast f(x,\omega).$$
Then
\begin{align*}
H_s(x)=\frac{1}{\sqrt{s}}H\big(\frac{|x|}{\sqrt{s}}\big)&=-\int^{\infty}_0\frac{1}{t\sqrt{s}}\chi_{[0,t]}\big(\frac{|x|}{\sqrt {s}}\big)h'(t)tdt\\&=-\int^{\infty}_0A_{t\sqrt{s}}(x)h'(t)tdt.
\end{align*}
Consequently,
$$H_sf(x,\omega)=-\int^{\infty}_0A_{t\sqrt{s}}f(x,\omega)h'(t)tdt.$$

Fix a  finite subset $J\subset\mathbb{R}_+$, we have
\begin{align*}
\mathcal{V}_{q,J}(H)&f(x,\omega)=\sup_{\{s_j\}\subset J}\big(\sum_j|H_{s_j}f(x,\omega)-H_{s_{j+1}}f(x,\omega)|^q\big)^{1/q}\\
&\leq \sup_{\{s_j\}\subset J}\big(\sum_j|\int^{\infty}_0(A_{t\sqrt{s_j}}f(x,\omega)-A_{t\sqrt{s_{j+1}}}f(x,\omega))h'(t)tdt|^q\big)^{1/q}\\
&\leq \sup_{\{s_j\}\subset J}\int^{\infty}_0\big(\sum_j|A_{t\sqrt{s_j}}f(x,\omega)-A_{t\sqrt{s_{j+1}}}f(x,\omega)|^q\big)^{1/q}|h'(t)|tdt\\
&\leq \int^{\infty}_0\sup_{\{s_j\}\subset J}\big(\sum_j|A_{t\sqrt{s_j}}f(x,\omega)-A_{t\sqrt{s_{j+1}}}f(x,\omega)|^q\big)^{1/q}|h'(t)|tdt\\
&\leq \sup_J\mathcal{V}_{q,J}(A)f(x,\omega)\cdot\int^{\infty}_0|h'(t)|tdt\leq C\sup_J\mathcal{V}_{q,J}(A)f(x,\omega).
\end{align*}
Using this pointwise estimates, we obtain the desired result.
\end{proof}

By Lemma \ref{reduction}, in order to prove Theorem \ref{main theorem}, it suffices to prove
\begin{align}\label{C8H}
C_{L^{\infty}(\mathbb{R})}(H)\geq M;\;\forall M>0.
\end{align}
Indeed, suppose Theorem \ref{main theorem} were not true, i.e. $C_{L^\infty(\mathbb{R})}(A)<\infty$, then by Lemma \ref{reduction},  $C_{L^{\infty}(\mathbb{R})}(H)<\infty$ which contradict with (\ref{C8H}).

\section{Proof of Theorem \ref{main theorem}}
This section is devoted to the proof of (\ref{C8H}), hence finishes the proof of Theorem \ref{main theorem}.
Let $a>1$. Define the functions
$$G=\sum^{-1}_{k=-\infty}(-1)^{k+1}\chi_{[a^k,a^{k+1})}.$$
The following estimate, which is  the (2.2) in \cite{BCT11}, is the starting point of the proof.
\begin{lemma}\label{lem: key estimate}
We can find some $a>1$ such that there exist $C>0$ and $j_0\in\mathbb{N}$ satisfying
\begin{align}\label{key estimate}
|H_{a^{-2j}}G(0)-H_{a^{-2(j+1)}}G(0)|\geq C
\end{align}
for all $j\geq j_0$.
\end{lemma}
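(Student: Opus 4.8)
The plan is to compute $H_s G(0)$ more or less explicitly and to exhibit an oscillation in $s$ along the geometric scale $s = a^{-2j}$ that does not vanish. Since $G$ is $\{-1,+1\}$-valued, supported on $(0,1)$, and piecewise constant on the dyadic-type blocks $[a^k, a^{k+1})$ for $k \le -1$, we have
\begin{align*}
H_s G(0) = \int_{\R} H_s(-y) G(y)\, dy = \frac{1}{\sqrt{4\pi s}} \int_0^1 e^{-y^2/(4s)} G(y)\, dy.
\end{align*}
Substituting $y = \sqrt{s}\, u$ turns this into $\frac{1}{\sqrt{4\pi}} \int_0^{1/\sqrt s} e^{-u^2/4} G(\sqrt s\, u)\, du$, and with $s = a^{-2j}$ the rescaled function $G(a^{-j} u)$ is exactly $G$ shifted by $j$ blocks: it is $\sum_{k \le j-1} (-1)^{k+1} \chi_{[a^k, a^{k+1})}(u)$ after relabeling. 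So $H_{a^{-2j}} G(0) = \frac{1}{\sqrt{4\pi}} \int_0^\infty e^{-u^2/4}\, \Phi_j(u)\, du$ where $\Phi_j$ is a $\pm 1$ square wave on the blocks $[a^k, a^{k+1})$, $k \le j-1$, and $0$ beyond. The point is that $H_{a^{-2j}} G(0)$ and $H_{a^{-2(j+1)}} G(0)$ correspond to square waves that differ by a shift of one block and an overall sign, so their difference is, up to the Gaussian weight, twice the integral of a single-sign square wave over the region where the two disagree.

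First I would make this cancellation precise: write $H_{a^{-2j}}G(0) - H_{a^{-2(j+1)}}G(0) = \frac{1}{\sqrt{4\pi}} \int_0^\infty e^{-u^2/4} (\Phi_j(u) - \Phi_{j+1}(u))\, du$ and identify $\Phi_j - \Phi_{j+1}$ as a function that, on each block $[a^k, a^{k+1})$ with $k \le j-1$, equals $\pm 2$ with a fixed sign pattern alternating block-to-block, and is $0$ (or a single leftover block) for $k \ge j$. Because the blocks shrink geometrically as $k \to -\infty$ and the Gaussian weight is essentially constant (equal to $\frac{1}{\sqrt{4\pi}}$) near $0$, the tail of the sum over very negative $k$ is an alternating series of terms comparable to $a^k(a-1)$, which sums to something of size comparable to a fixed nonzero constant times $(a-1)$ (alternating, so partial sums converge and the value is bounded below away from $0$ for suitable $a$). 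Meanwhile the blocks with $k$ near $j-1$ sit at scale $u \sim a^{j-1}$; for $j$ large these are pushed out to $u \to \infty$ where $e^{-u^2/4}$ is negligible, so they contribute an error that is $o(1)$ as $j \to \infty$, uniformly.

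Combining, $H_{a^{-2j}}G(0) - H_{a^{-2(j+1)}}G(0) = c(a) + o(1)$ as $j \to \infty$, where $c(a) \ne 0$ for a suitable choice of $a > 1$; choosing such an $a$ and then $j_0$ large enough that the $o(1)$ term is at most $|c(a)|/2$ gives (\ref{key estimate}) with $C = |c(a)|/2$ for all $j \ge j_0$. Since this is precisely (2.2) of \cite{BCT11}, I would in practice simply invoke that computation; the reproduction above indicates the mechanism. The one point requiring genuine care — the main obstacle — is verifying that $c(a) \ne 0$: one must check that the choice of the geometric ratio $a$ can be made so that the alternating block sum against the (nearly constant) Gaussian weight does not accidentally cancel to zero. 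This is a one-parameter condition, so it holds for all but possibly a discrete set of $a$, and in particular for some $a > 1$; pinning down an explicit admissible $a$ (as in \cite{BCT11}) is the only slightly delicate computation.
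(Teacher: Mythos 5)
The paper offers no proof of this lemma at all: it is quoted verbatim as estimate (2.2) of \cite{BCT11}, so your fallback of simply invoking that computation is exactly what the paper does, and your rescaling/sign-alternation sketch is indeed the mechanism behind it (with $s=a^{-2j}$ one gets $H_{a^{-2j}}G(0)=(-1)^j\,I(a)+E_j$ with $I(a)=\frac{1}{\sqrt{4\pi}}\sum_{m\in\Z}(-1)^{m+1}\int_{a^m}^{a^{m+1}}e^{-u^2/4}\,du$ and $|E_j|\le\frac{1}{\sqrt{4\pi}}\int_{a^j}^{\infty}e^{-u^2/4}\,du$, so consecutive differences have modulus $2|I(a)|+o(1)$ as $j\to\infty$; note the factor $(-1)^j$ you half-suppress in the relabeling is what makes the two square waves opposite in sign on their common support). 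The one step you yourself flag as delicate is, however, a genuine gap if the sketch is meant to stand alone: the claim that a ``one-parameter condition'' holds off a discrete set presupposes that $a\mapsto I(a)$ is analytic \emph{and not identically zero}, and the latter is precisely the non-vanishing you are trying to establish, so the argument as stated is circular. The repair is cheap and avoids genericity altogether: the blocks with $m\le-2$ contribute at most $a^{-1}$ in total and those with $m\ge1$ at most $\int_a^{\infty}e^{-u^2/4}\,du$, so as $a\to\infty$ one has $\sqrt{4\pi}\,I(a)\to\int_0^1 e^{-u^2/4}\,du-\int_1^{\infty}e^{-u^2/4}\,du=\sqrt{\pi}\,\bigl(2\,\mathrm{erf}(1/2)-1\bigr)>0$ (since $\mathrm{erf}(1/2)\approx0.52$), hence any sufficiently large fixed $a$ gives $I(a)\neq0$, and choosing $j_0$ so that the Gaussian-tail error is at most $|I(a)|/2$ yields \eqref{key estimate} with $C=|I(a)|$.
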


Now we are at a position to prove Theorem \ref{main theorem}.
\begin{proof}
We define a function on $\mathbb{R}\times\mathbb{R}$
$$\tilde{G}(x,y)=G(x-y)\chi_{[-1,1]}(y).$$
It is easy to check that $G\neq0$ only if $-1<x<2$, and whence
$$\|\tilde{G}\|_{L^p(L^{\infty})}=\|\sup_{y\in [-1,1]}\tilde{G}(\cdot,y)\|_{p}\leq 3^{1/p}.$$
As a consequence, for any $j_1\geq j_0$ which has appeared in Lemma \ref{lem: key estimate}, we have
$$C_{L^{\infty}(\mathbb{R})}(H)\geq 3^{-1/p}\|\big(\sum^{j_1}_{j=j_0}|H_{a^{-2j}}\tilde{G}-H_{a^{-2(j+1)}}\tilde{G}|^q\big)^{1/q}\|_{L^p(L^{\infty})}.$$
It is easy to verify that
\begin{align*}
H_{a^{-2j}}\tilde{G}(x,y)&=\int_{\mathbb{R}}H_{a^{-2j}}(x-z)\tilde{G}(z,y)dz\\
&=\int_{\mathbb{R}}H_{a^{-2j}}(x-z){G}(z-y)\chi_{[-1,1]}(y)dz\\
&=\int_{\mathbb{R}}H_{a^{-2j}}(x-y-z){G}(z)\chi_{[-1,1]}(y)dz\\
&=H_{a^{-2j}}{G}(x-y)\chi_{[-1,1]}(y).
\end{align*}
Note that whenever $x\in(0,1)$, the interval $(x-1,x+1)$ contain the interval $[0,1]$. Therefore $C_{L^{\infty}(\mathbb{R})}(H)$ is not less than
\begin{align*}
 &3^{-1/p}\Big(\int^1_0\sup_{y\in(-1,1)}\big(\sum^{j_1}_{j=j_0}|H_{a^{-2j}}{G}(x-y)-H_{a^{-2(j+1)}}{G}(x-y)|^q\big)^{p/q}dx\Big)^{1/p}\\
&=3^{-1/p}\Big(\int^1_0\sup_{y\in(x-1,x+1)}\big(\sum^{j_1}_{j=j_0}|H_{a^{-2j}}{G}(y)-H_{a^{-2(j+1)}}{G}(y)|^q\big)^{p/q}dx\Big)^{1/p}\\
&\geq 3^{-1/p}\Big(\int^1_0\sup_{y\in[0,1]}\big(\sum^{j_1}_{j=j_0}|H_{a^{-2j}}{G}(y)-H_{a^{-2(j+1)}}{G}(y)|^q\big)^{p/q}dx\Big)^{1/p}\\
\end{align*}
On the other hand, changing the variable, for every $z\in\mathbb{R}$,
\begin{align*}
&|H_{a^{-2j}}{G}(y)-H_{a^{-2(j+1)}}{G}(y)|\\
&=\frac{1}{\sqrt{4\pi}}\left|\frac{1}{a^{-j}}\int_{\mathbb{R}}e^{-|y-z|^2/4a^{-2j}}G(z)dz-\frac{1}{a^{-{(j+1)}}}\int_{\mathbb{R}}e^{-|y-z|^2/4a^{-2{(j+1)}}}G(z)dz\right|\\
&=\frac{1}{\sqrt{4\pi}}\left|(-1)^j\int_{\mathbb{R}}e^{-u^2/4}g(u+a^{-j}y)\chi_{[0,a^{-j})}(u+a^{-j}y)du\right.\\
&\hspace{4em}\left.- (-1)^{j+1}\int_{\mathbb{R}}e^{-u^2/4}g(u+a^{-(j+1)}y)\chi_{[0,a^{-(j+1)})}(u+a^{-(j+1)}y)du\right|.
\end{align*}
Observe that when $t$ tends to 0,
$$\int_{\mathbb{R}}e^{-u^2/4}g(u+t)\chi_{[0,B)}(u+t)du\rightarrow \int_{\mathbb{R}}e^{-u^2/4}g(u)\chi_{[0,B)}(u)du$$
uniformly in $B\in(0,\infty)$. Hence, for all $j_0\leq j\leq j_1$, $y\rightarrow 0$ implies
$$|H_{a^{-2j}}{G}(y)-H_{a^{-2(j+1)}}{G}(y)|\rightarrow |H_{a^{-2j}}{G}(0)-H_{a^{-2(j+1)}}{G}(0)|$$
Thus,  there exists $\delta>0$ such that for all $|y|<\delta a^{-(j_1+1)}$ and  all $j_0\leq j\leq j_1$, we have
\begin{align}\label{y control 0}
|H_{a^{-2j}}{G}(y)-H_{a^{-2(j+1)}}{G}(y)|\geq \frac{1}{2}|H_{a^{-2j}}{G}(0)-H_{a^{-2(j+1)}}{G}(0)|.
\end{align}
Finally, by Lemma \ref{key estimate}, we obtain
\begin{align*}
C_{L^{\infty}(\mathbb{R})}(H) &\geq 3^{-1/p}\sup_{y\in[0,1]}\big(\sum^{j_1}_{j=j_0}|H_{a^{-2j}}{G}(y)-H_{a^{-2(j+1)}}{G}(y)|^q\big)^{1/q}\\
&\geq 3^{-1/p}\frac{C}{2}(j_1-j_0)\geq M,
\end{align*}
for any $M>0$, provided that $j_1$ is taken big enough. This finishes the proof of (\ref{C8H}), hence of Theorem \ref{main theorem}.
\end{proof}

\section{Proof of Theorem \ref{main theorem 2}}
The starting point of the proof is the following proposition, which is Theorem \ref{main theorem 2} in the case $X=L^r(\mathbb{R})$.
\begin{proposition}\label{pro:CrA}
Let $1<p<\infty$ and $2<q<\infty$. Then there exist a constant $C>0$ such that $C_{L^r(\mathbb{R})}(A)\geq Cr^{1/q}$.
\end{proposition}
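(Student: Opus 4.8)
The plan is to combine the $L^\infty$-construction from the previous section with the lattice structure of $L^r(\mathbb{R})$, using the fact that $r$-convexity with constant $1$ (which holds for $L^r$) turns an $\ell^r$-sum of fiberwise estimates into a usable lower bound. By Lemma \ref{reduction}, it suffices to prove the analogous lower bound for $C_{L^r(\mathbb{R})}(H)$, i.e. to exhibit, for each fixed $r$, an $L^r(\mathbb{R})$-valued function $f$ on $\mathbb{R}$ with $\|\mathcal{V}_{q,J}(H)f\|_{L^p(L^r)}\geq C r^{1/q}\|f\|_{L^p(L^r)}$ for a suitable $J$. The natural candidate is built out of $\lfloor r\rfloor$ (or a constant multiple of $r$) disjointly supported copies of the single-variable function $\tilde G(x,y)=G(x-y)\chi_{[-1,1]}(y)$ from Section 3, placed on disjoint strips in the $\Omega$-variable so that they do not interact under the convolution $H_s$ (which acts only in the $x$-variable). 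Concretely, on $\Omega=\mathbb{R}$ take $f(x,\omega)=\sum_{\ell=1}^{N}\tilde G(x,\omega-3\ell)$ with $N\sim r$; the supports in $\omega$ are pairwise disjoint, so $\|f(x,\cdot)\|_{L^r}^r=\sum_\ell\|\tilde G(x,\cdot-3\ell)\|_{L^r}^r=N\|\tilde G(x,\cdot)\|_{L^r}^r$, giving $\|f\|_{L^p(L^r)}=N^{1/r}\|\tilde G\|_{L^p(L^r)}\leq 3^{1/p}\,|\,\mathrm{supp}\,|^{?}N^{1/r}$, a quantity that grows only like $N^{1/r}$, hence is essentially bounded as $N\to\infty$ once we normalize.

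Next I would compute the variation from below on the region $x\in(0,1)$. For each fiber $\omega$ lying in the $\ell$-th strip, the same computation as in the proof of Theorem \ref{main theorem} shows $H_s f(x,\omega)=H_s G(x-\omega+3\ell)\chi_{[-1,1]}(\omega-3\ell)$, and with the choice $J=\{a^{-2j}:j_0\le j\le j_1\}$ together with the estimate \eqref{y control 0} and Lemma \ref{lem: key estimate}, one gets $\mathcal{V}_{q,J}(H)f(x,\omega)\ge 3^{-1/p}\tfrac{C}{2}(j_1-j_0)^{1/q}$ on a set of $\omega$'s of positive measure inside each strip (those with $|\omega-3\ell|<\delta a^{-(j_1+1)}$), provided $x$ and $\omega$ are coupled appropriately — here one must be slightly careful and instead translate the strips so that for each fixed $x\in(0,1)$ there is, in \emph{every} strip, a subinterval of $\omega$ of length $\sim\delta a^{-(j_1+1)}$ on which the fiberwise variation is $\gtrsim (j_1-j_0)^{1/q}$. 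Taking the $L^r(d\omega)$ norm then sums $N$ such contributions: $\|\mathcal{V}_{q,J}(H)f(x,\cdot)\|_{L^r}\gtrsim \big(N\cdot\delta a^{-(j_1+1)}\big)^{1/r}(j_1-j_0)^{1/q}$. Dividing by $\|f\|_{L^p(L^r)}\sim N^{1/r}$, the $N^{1/r}$ factors cancel, leaving $C_{L^r}(H)\gtrsim (j_1-j_0)^{1/q}$ times a constant depending on $r$ only through $a,\delta,j_0$ — but crucially \emph{independent of $N$}. We then let $N\to\infty$ is not what produces the $r^{1/q}$; rather, the $r^{1/q}$ must come from choosing $j_1-j_0\sim r$, which requires the number of usable scales to grow with $r$.

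The main obstacle — and the place where the argument genuinely uses $X=L^r$ rather than $L^\infty$ — is reconciling the two competing demands: in $L^\infty$ one could take $j_1-j_0$ arbitrarily large for free, but in $L^r$ the region $\{|y|<\delta a^{-(j_1+1)}\}$ where \eqref{y control 0} holds shrinks geometrically in $j_1$, so its $L^r$-measure decays like $a^{-(j_1+1)/r}$, which kills a gain of $(j_1-j_0)^{1/q}$ unless $j_1-j_0\lesssim r$. Thus the correct scaling is $j_1-j_0\asymp r$: the shrinking factor $a^{-r/r}=a^{-1}$ stays bounded below, while $(j_1-j_0)^{1/q}\asymp r^{1/q}$. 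Making this quantitative — tracking how $\delta$, the constant $C$ in Lemma \ref{lem: key estimate}, and the uniform-convergence rate in $t$ depend (or rather, do \emph{not} depend) on $j_1$ as $j_1\sim r\to\infty$ — is the technical heart of the proof; the disjoint-strips device and the cancellation of the $N^{1/r}$ factors are then routine, and one finally invokes Lemma \ref{reduction} to transfer the bound back to $C_{L^r(\mathbb{R})}(A)$.
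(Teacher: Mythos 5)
Your proposal arrives, after discarding its own scaffolding, at essentially the paper's argument: the paper simply takes the single function $\tilde G$ from Section 3, the scales $a^{-2j}$ with $j_0\le j\le j_1$ and $j_1=[r]j_0$, and balances the variation gain $(j_1-j_0)^{1/q}\gtrsim r^{1/q}$ against the measure factor $(\delta a^{-(j_1+1)})^{1/r}\to a^{-j_0}$, the uniformity you flag as the ``technical heart'' (that $\delta$ and the constant in Lemma \ref{lem: key estimate} do not depend on $j_1$) being already available from Section 3 — the constant of Lemma \ref{lem: key estimate} is uniform in $j\ge j_0$ and $\delta$ comes from the uniform-in-$B$ convergence used to establish \eqref{y control 0}. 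The disjoint-strips device is superfluous, as you yourself note when the $N^{1/r}$ factors cancel; removing it leaves exactly the paper's proof.
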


The proof is postponed to the next section.
The proof of Theorem \ref{main theorem 2} is divided into two steps. The first step is to deduce $C_{\ell^r}(A)\geq Cr^{1/q}$ from $C_{L^r(\mathbb{R})}(A)\geq Cr^{1/q}$ which is Proposition \ref{pro:CrA}; The second step is to show $C_{X}(A)\geq Cr^{1/q}/c_r$ from $C_{\ell^r}(A)\geq Cr^{1/q}$ using Proposition 3.11 of \cite{HMST95}. We state this proposition here for convenience.

\begin{lemma}\label{lem:finite representation of X}
Let $X$ be a K\"othe function space with $X^*$ norming such that $X$ is $r$-convex for some $r$, $1<r<\infty$ and $X$ is not $s$-convex for any $s$, $r<s<\infty$. Then given $\varepsilon$ and a positive integer $m$, there exists a sequence $\{e_i\}^m_{i=1}$ of pairwise disjoint elements of $X$ such that
\begin{align}\label{finite representation of X}
(1-\varepsilon)\sum^m_{i=1}b_i^r\leq\left\|\sum^m_{i=1}b_ie_i\right\|^r_X\leq c^r_r\sum^m_{i=1}b^r_i
\end{align}
holds for any sequence $\{b_i\}^m_{i=1}$ of non negative scalars and $c_r$ is the constant of $r$-convexity of $X$.
\end{lemma}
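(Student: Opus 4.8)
The right--hand inequality in \eqref{finite representation of X} is automatic for \emph{any} pairwise disjoint normalized system $\{e_i\}$: since the $e_i$ are disjoint, $\bigl|\sum_i b_ie_i\bigr|=\bigl(\sum_i|b_ie_i|^r\bigr)^{1/r}$ in the lattice, so $r$--convexity gives $\bigl\|\sum_i b_ie_i\bigr\|\le c_r\bigl(\sum_i b_i^r\|e_i\|^r\bigr)^{1/r}=c_r\bigl(\sum_i b_i^r\bigr)^{1/r}$. Hence the whole content of the lemma is the \emph{lower} bound, i.e.\ the assertion that for every $m$ and $\varepsilon$ there is a disjoint normalized system $e_1,\dots,e_m$ whose linear span, measured from below, is within $(1-\varepsilon)^{1/r}$ of $\ell^r_m$; equivalently, that $\ell^r$ is block finitely representable, in the lattice sense, in $X$. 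This is a form of Krivine's theorem for Banach lattices, and I would proceed as follows. First, translate the hypotheses into estimates: $r$--convexity yields an upper $r$--estimate on disjoint elements with the same constant $c_r$, while the classical fact that an upper $s$--estimate forces $s'$--convexity for all $s'<s$ (Lindenstrauss--Tzafriri) turns the hypothesis ``$X$ not $s$--convex for any $s>r$'' into ``$X$ admits no upper $s$--estimate for any $s>r$''.

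Next, treat the flat case. Put $G(N)=\sup\{\|\sum_{i=1}^N e_i\|:\ e_i\text{ disjoint},\ \|e_i\|=1\}$; the upper $r$--estimate gives $G(N)\le c_rN^{1/r}$, and $G$ is nondecreasing and submultiplicative, $G(MN)\le G(M)G(N)$. If $\liminf_N\log G(N)/\log N<1/r$, submultiplicativity spreads a subsequential bound $G(N)\lesssim N^{1/s}$ (some $s>r$) to all $N$, and the standard dyadic--blocking argument upgrades it to a genuine upper $s$--estimate, contradicting the previous paragraph; hence $\log G(N)/\log N\to 1/r$. A short ``defect'' computation (the superadditive--along--products quantity $\tfrac1r\log N-\log G(N)$ is $o(\log N)$, hence $\le 0$) then sharpens this to $N^{1/r}\le G(N)\le c_rN^{1/r}$ for every $N$. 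In particular, for each $\delta>0$ and each $N$ there is a disjoint normalized $u_1,\dots,u_N$ with $\|\sum_{i=1}^N u_i\|\ge(1-\delta)N^{1/r}$.

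The hard step is passing from these flat systems (which only control the single vector $(1,\dots,1)$) to a system uniform over all $b\ge0$, as the lemma demands; one cannot simply recurse inside the band generated by a single $u_i$, since that band need not inherit the failure of $s$--convexity. The remedy is Krivine's stabilization: pass to an ultrapower of $X$ (or to $X^{**}$ --- here the hypothesis that $X^*$ is norming is used, so that a finite disjoint configuration produced in the bidual descends, up to $\varepsilon$, to $X$ itself), where elements can be split and the flat construction above can be performed at all scales at once. The resulting \emph{nested} flat systems force $\|\sum_i b_ie_i\|$ to coincide, up to $\varepsilon$, with an iterated --- hence honest --- $\ell^r$ norm of $b$: first for rational ``block--count'' vectors $b$, and then for all $b\ge0$ by continuity of $b\mapsto\|\sum_i b_ie_i\|$ together with compactness of $\{b\ge0:\sum_i b_i^r=1\}$. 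Pulling the finite system back to $X$ yields disjoint normalized $e_1,\dots,e_m$ with $\|\sum_i b_ie_i\|\ge(1-\varepsilon)(\sum_i b_i^r)^{1/r}$, which combined with the automatic upper bound proves \eqref{finite representation of X}. I expect this last step --- obtaining the coefficient--uniform lower estimate, i.e.\ true block finite representability of $\ell^r$, rather than merely the flat estimate --- to be the main obstacle; it is precisely where the Krivine iteration and the hypotheses ``$X^*$ norming'' and ``$X$ not $s$--convex for any $s>r$'' are used in full, the passages between $s$--convexity and upper $s$--estimates being the other, by now standard, nontrivial ingredient.
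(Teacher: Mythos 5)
You should first be aware that the paper does not prove this lemma at all: it is quoted verbatim as Proposition 3.11 of \cite{HMST95}, so there is no internal argument to compare with. What that citation encapsulates is precisely the Lindenstrauss--Tzafriri/Krivine structure theory of convexity indices, i.e.\ the block finite representability of $\ell^r$ on disjoint vectors in a lattice which is $r$-convex but not $s$-convex for any $s>r$ (cf.\ \cite{LiTz79}). Measured against that, the easy half and the ``flat'' half of your outline are sound: the upper bound is indeed automatic from disjointness plus $r$-convexity (for disjoint positive vectors $\sum_i b_ie_i=(\sum_i b_i^re_i^r)^{1/r}$ pointwise), and the submultiplicativity argument for $G(N)$, combined with the fact that an upper $s$-estimate forces $s'$-convexity for $s'<s$, does give $N^{1/r}\le G(N)\le c_rN^{1/r}$ --- modulo the ``upgrade a growth bound on disjoint sums to a genuine upper estimate'' step, which you invoke rather than prove (it is standard, but it is a real lemma, not a one-line blocking remark).

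The genuine gap is the decisive third step. The assertion that ``nested flat systems force $\|\sum_i b_ie_i\|$ to coincide, up to $\varepsilon$, with an iterated $\ell^r$ norm'' is exactly the statement to be proved, and no mechanism is supplied. Note in particular that a flat lower bound does not control subsums well: splitting $\sum_{i=1}^N u_i$ into $\sum_{i\in A}u_i$ and its complement and applying the upper $r$-estimate only yields $\|\sum_{i\in A}u_i\|^r\ge \|\sum_{i=1}^N u_i\|^r/c_r^r-c_r^r|A^c|$, which is useless when $c_r$ is large; so blocking a single flat system cannot give the $(1-\varepsilon)$ lower bound for general coefficients, and one needs either the renorming making the $r$-convexity constant $1$ (with a careful transfer of both bounds back to the original norm, where the slack between $1-\varepsilon$ and $c_r$ in \eqref{finite representation of X} is what saves you) or the full Krivine stabilization carried out in detail --- this is the content of the block representability theorem in \cite{LiTz79} that \cite{HMST95} packages as Proposition 3.11. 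Your guess about where ``$X^*$ norming'' enters is also unconvincing: a finite disjoint configuration in an ultrapower of $X$ is by construction an ultralimit of configurations in $X$ and descends, after a small perturbation restoring disjointness, with no norming hypothesis; that assumption belongs to the standing Köthe-duality framework of \cite{HMST95} rather than to the descent step you describe. In short, your proposal is a correct road map whose hardest stretch is deferred to a theorem it does not prove; citing that theorem (as the paper implicitly does) is what actually closes the argument.
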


Now let us give the proof of Theorem \ref{main theorem 2}.
\begin{proof}
(i) $C_{\ell^r}(A)\geq Cr^{1/q}$. Since the set
$$S=\{f=\sum^m_{k=1}\big(\sum^n_{j=1}a_{jk}\chi_{F_j}(x)\big)\chi_{E_k}(y):\;a_{jk}\in\mathbb{C},\;F_j\subset\mathbb{R}\; ,E_k\subset\mathbb{R}\;\mathrm{disjoint}\}$$
is dense in $L^p(\mathbb{R};L^r(\mathbb{R}))$, by the definition of $C_{L^r(\mathbb{R})}(A)$, we can find some $J$ and $f\in S$ such that
$$\|\mathcal{V}_{q,J}(A)f\|_{L^p(L^r)}\geq Cr^{1/q}\|f\|_{L^p(L^r)}.$$
For $f$ in the form in $S$,
we define a $\ell^r$-valued  function $\tilde{f}$ on $\mathbb{R}$ as
$$\tilde{f}(x)=\{|E_k|^{1/r}\sum^n_{j=1}a_{jk}\chi_{F_j}(x)\}^m_{k=1}.$$
Then the desired estimate follows from the two identities
\begin{align*}
\|\tilde{f}\|_{L^p(\ell^r)}=\|f\|_{L^p(L^r)},
\end{align*}
and
\begin{align*}
\|\mathcal{V}_{q,J}(A)\tilde{f}\|_{L^p(\ell^r)}=\|\mathcal{V}_{q,J}(A)f\|_{L^p(L^r)}.
\end{align*}
Let us just explain the latter equality, since the first one is proved in a similar way. Denote $\sum^n_{j=1}a_{jk}\chi_{F_j}(x)$ by $F_{jk}(x)$. Fix $x\in\mathbb{R}$,  by the fact that $E_k$'s are disjoint,
\begin{align*}
\|\mathcal{V}_{q,J}(A)f(x,\cdot)\|^r_{L^r}&=\int_{\mathbb{R}}\sup_{\{t_i\}\subset J}\big(\sum_i|A_{t_i}f(x,y)-A_{t_{i+1}}f(x,y)|^q\big)^{\frac rq}dy\\
&=\int_{\mathbb{R}}\sup_{\{t_i\}\subset J}\big(\sum_i|\sum^m_{k=1}(A_{t_i}-A_{t_{i+1}})F_{jk}(x)\chi_{E_k}(y)|^q\big)^{\frac rq}dy\\
&=\sum^m_{k=1}\int_{\mathbb{R}}\sup_{\{t_i\}\subset J}\big(\sum_i|(A_{t_i}-A_{t_{i+1}})F_{jk}(x)\chi_{E_k}(y)|^q\big)^{\frac rq}dy\\
&=\sum^m_{k=1}|E_k|\sup_{\{t_i\}\subset J}\big(\sum_i|(A_{t_i}-A_{t_{i+1}})F_{jk}(x)|^q\big)^{\frac rq}\\
&=\left\|\Big\{\sup_{\{t_i\}\subset J}\big(\sum_i||E_k|^{\frac1r}(A_{t_i}-A_{t_{i+1}})F_{jk}(x)|^q\big)^{\frac 1q}\Big\}^m_{k=1}\right\|^r_{\ell^r}\\
&=\|\mathcal{V}_{q,J}(A)\tilde{f}\|^r_{\ell^r}.
\end{align*}

(ii) $C_{X}(A)\geq Cr^{1/q}/c_r$. Similarly, by density argument, we can find some $J$ and $f\in L^p(\mathbb{R};\ell^r)$ of the form
$$f(x)=\big\{f_{k}(x)\big\}^{m}_{k=1},\;\mathrm{with}\; f_{k}\in L^p(\mathbb{R})$$
such that
$$\|\mathcal{V}_{q,J}(A)f\|_{L^p(\ell^r)}\geq Cr^{1/q}\|f\|_{L^p(\ell^r)}.$$
Fix $x\in\mathbb{R}$. Take $\varepsilon=1/2$, by Lemma \ref{lem:finite representation of X},  there exists a sequence $\{e_k\}^m_{k=1}$ of pairwise disjoint elements of $X$ such that
\begin{align*}
\left\|\sum^m_{k=1}f_{k}(x)e_k\right\|^r_X\leq c^r_r\sum^m_{k=1}f^r_k(x)=c^r_r\|f(x)\|^r_{\ell^r}
\end{align*}
and
\begin{align*}
\frac{1}{2}\|\mathcal{V}_{q,J}(A)f(x)\|^r_{\ell^r}=\frac{1}{2}\sum^m_{k=1}\big(\mathcal{V}_{q,J}(A)f_k(x)\big)^r\leq\left\|\sum^m_{k=1}\big(\mathcal{V}_{q,J}(A)f_k(x)\big)e_k\right\|^r_X.
\end{align*}
Now define $X$-valued function $\tilde{f}(x)=\sum^m_{k=1}f_{k}(x)e_k$, using the disjoint property of $e_k$'s,  it is easy to check that
$$\|\mathcal{V}_{q,J}(A)\tilde{f}(x)\|_X=\left\|\sum^m_{k=1}\big(\mathcal{V}_{q,J}(A)f_k(x)\big)e_k\right\|_X.$$
To conclude, using the result obtained in first step, we deduce that
\begin{align*}
\|\mathcal{V}_{q,J}(A)\tilde{f}\|_{L^p(X)}&\geq 2^{-1/r}\|\mathcal{V}_{q,J}(A)f\|_{L^p(\ell^r)}\\
&\geq 2^{-1/r}Cr^{1/q}\|f\|_{L^p(\ell^r)}\geq Cr^{1/q}/c_r\|\tilde{f}\|_{L^p(X)},
\end{align*}
which implies the desired estimate.
\end{proof}

\section{Proof of Proposition \ref{pro:CrA}}
As in section 2, by Lemma \ref{reduction}, to prove Proposition \ref{pro:CrA}, it suffices to prove
\begin{align}\label{CrH}
C_{L^r{\mathbb{R}}}(H)\geq Cr^{1/q}.
\end{align}
Now we adapt the previous argument for the proof of (\ref{C8H}) to the proof of (\ref{CrH}). Let us explain it.
\begin{proof}
Take the function $\tilde{G}$ as in the proof of Theorem \ref{main theorem}. It is easy to check that
$$\|\tilde{G}\|_{L^p(L^r)}\leq 2^{1/r}3^{1/p}.$$
As a consequence, for any $j_1\geq j_0$ which has appeared in Lemma \ref{lem: key estimate}, we have
$$C_{L^{r}(\mathbb{R})}(H)\geq 2^{-1/r}3^{-1/p}\|\big(\sum^{j_1}_{j=j_0}|H_{a^{-2j}}\tilde{G}-H_{a^{-2(j+1)}}\tilde{G}|^q\big)^{1/q}\|_{L^p(L^{r})}.$$
As in the proof of Theorem \ref{main theorem}, we arrive at the step that $C_{L^{r}(\mathbb{R})}(H)$ is not less than
$$2^{-1/r}3^{-1/p}\left(\int^1_0\Big(\int^1_0\big(\sum^{j_1}_{j=j_0}|H_{a^{-2j}}{G}(y)-H_{a^{-2(j+1)}}{G}(y)|^q\big)^{r/q}dy\Big)^{p/r}dx\right)^{1/p}.$$
Now using the estimate (\ref{y control 0}) for all $|y|<\delta a^{-(j_1+1)}$ and  all $j_0\leq j\leq j_1$, we get
\begin{align*}
&C_{L^{r}(\mathbb{R})}(H)\geq 2^{-1/r}3^{-1/p}\Big(\int^{1}_0\big(\sum^{j_1}_{j=j_0}|H_{a^{-2j}}{G}(y)-H_{a^{-2(j+1)}}{G}(y)|^q\big)^{r/q}dy\Big)^{1/r}\\
&\geq 2^{-1/r}3^{-1/p}\Big(\int^{\delta a^{-(j_1+1)}}_0\big(\sum^{j_1}_{j=j_0}|H_{a^{-2j}}{G}(y)-H_{a^{-2(j+1)}}{G}(y)|^q\big)^{r/q}dy\Big)^{1/r}\\
&\geq 2^{-1/r}3^{-1/p}\frac{C}{2}(j_1-j_0)^{\frac{1}{q}}({\delta a^{-(j_1+1)}})^{{1/r}}=\frac{C}{2}3^{-1/p}(\frac{\delta}{2a})^{1/r}a^{-j_1/r}(j_1-j_0)^{1/q}.
\end{align*}
Taking $j_1=[r]j_0$, note that $({\delta}/{2a})^{1/r}\rightarrow 1$ and $a^{-j_1/r}\rightarrow a^{-j_0}$ as $r\rightarrow\infty$. Therefore, we obtain the desired result, i.e. for large $r$, we have
\begin{align*}
C_{L^{r}(\mathbb{R})}(H)\geq Cr^{1/q}.
\end{align*}
\end{proof}

Proposition \ref{pro:CrA} should be compared to the following similar behaviour of Hilbert transform. 
\begin{proposition}\label{pro:lower bound of H}
For any $1<p<\infty$, there exist a positive constant $C$ such that
\begin{align}\label{lower bound of H}
\|H\|_{L^p(\mathbb{R};L^r(\mathbb{R}))\rightarrow L^p(\mathbb{R};L^r(\mathbb{R}))}\geq Cr
\end{align}
for sufficiently large $r$.
\end{proposition}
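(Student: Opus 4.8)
The plan is to mimic, for the Hilbert transform $H$, the single test function used for the differential averages in Section~3, but with the cut‑off placed on the fibre variable. Define on $\mathbb{R}\times\mathbb{R}$
$$\tilde g(x,y)=\chi_{[0,1]}(x-y)\,\chi_{[-1,1]}(y).$$
As for $\tilde G$ in Section~3, $\tilde g(x,\cdot)$ vanishes unless $x\in[-1,2]$ and $\|\tilde g(x,\cdot)\|_{L^r_y}^r=|[-1,1]\cap[x-1,x]|\le 1$, so $\|\tilde g\|_{L^p(\mathbb{R};L^r(\mathbb{R}))}\le 3^{1/p}$. The structural point is that $H$ acts only in the variable $x$, while the factor $\chi_{[-1,1]}(y)$ is inert; hence, with no error term at all,
$$H\tilde g(x,y)=\big(H\chi_{[0,1]}\big)(x-y)\,\chi_{[-1,1]}(y),\qquad H\chi_{[0,1]}(u)=\log\left|\frac{u}{u-1}\right|$$
(the normalization of $H$ is irrelevant below). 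Thus the whole matter reduces to showing that $H\chi_{[0,1]}$, near its logarithmic singularity at $u=0$, has $L^r$‑norm of order $r$.

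Concretely I would restrict the outer $x$‑integral to $x\in(0,1/2)$: for such $x$ the interval $(-1/4,1/4)$ lies inside $(x-1,x+1)$, so the inner slice $\int_{-1}^1|H\chi_{[0,1]}(x-y)|^r\,dy=\int_{x-1}^{x+1}|H\chi_{[0,1]}(u)|^r\,du$ dominates $\int_{-1/4}^{1/4}|H\chi_{[0,1]}(u)|^r\,du$. Since $|H\chi_{[0,1]}(u)|\ge\frac12\big|\log|u|\big|$ for $0<|u|<1/4$, the substitution $u=e^{-s}$ gives
$$\int_{-1}^1|H\chi_{[0,1]}(x-y)|^r\,dy\ \ge\ 2^{1-r}\int_{\log 4}^{\infty}s^{r}e^{-s}\,ds\ \ge\ 2^{1-r}\int_{r}^{r+1}s^{r}e^{-s}\,ds\ \ge\ 2^{1-r}e^{-1}r^{r}e^{-r}.$$
Taking $r$‑th roots, the left‑hand side is $\ge c\,r$ for an absolute constant $c>0$. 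Hence $\|H\tilde g\|_{L^p(L^r)}^p\ge\int_0^{1/2}(cr)^p\,dx=\frac12(cr)^p$, and dividing by $3^{1/p}$ yields $\|H\|_{L^p(\mathbb{R};L^r(\mathbb{R}))\to L^p(\mathbb{R};L^r(\mathbb{R}))}\ge C(p)\,r$ for large $r$, which is (\ref{lower bound of H}).

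The only genuinely analytic ingredient is the elementary estimate $\int_{\log 4}^{\infty}s^{r}e^{-s}\,ds\gtrsim r^{r}e^{-r}$, namely that the integrand peaks at $s=r$ (Laplace's method / Stirling); this is in effect the lower half of the Pichorides growth of $\|H\|_{L^r(\mathbb{R})\to L^r(\mathbb{R})}$, read off from the logarithmic blow‑up of $H\chi_{[0,1]}$. The point requiring care, I expect, is the placement of the cut‑off: it must sit on the fibre variable $y$ and not on $x$. Cutting off in $x$ --- the variable $H$ acts on --- would break $H\tilde g$ into a translation‑invariant main term and a boundary correction which agree to leading order and cancel at large distance (this cancellation being precisely what makes $Hf$ decay), so the correction would not be a lower‑order term and the estimate would collapse; keeping the cut‑off on $y$ sidesteps this, which is why the Section~3 scheme carries over essentially verbatim.
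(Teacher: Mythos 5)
Your proposal is correct and follows essentially the same route as the paper: the same tensor-type test function $\chi_{[0,1]}(x-y)\chi_{[-1,1]}(y)$ with the cut-off on the fibre variable, the same commutation $H\tilde g(x,y)=H\chi_{[0,1]}(x-y)\chi_{[-1,1]}(y)$, and the same reduction to the $L^r$ growth of the logarithmic singularity of $H\chi_{[0,1]}$ near $0$. The only cosmetic difference is how the order-$r$ lower bound is extracted (your Laplace-type bound $\int_{\log 4}^{\infty}s^{r}e^{-s}\,ds\gtrsim r^{r}e^{-r}$ versus the paper's restriction to $[0,e^{-r}]$ where $|H\chi_{[0,1]}|\geq r/2$), which yields the same conclusion.
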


On the other hand, it is trivial that the Hardy-Littlewood maximal function satisfies
\begin{align*}\label{lower bound of M}
\|M\|_{L^p(\mathbb{R};L^r(\mathbb{R}))\rightarrow L^p(\mathbb{R};L^r(\mathbb{R}))}\geq C.
\end{align*}
Hence the $q$ variation operator can be regarded as an operator between singular integral operators and maximal operator. 

Proposition \ref{pro:lower bound of H} should have been known somewhere, but we do not find it in any literature. Hence we give a proof here. 

\begin{proof}
We will consider the function on $\mathbb{R}\times\mathbb{R}$
$$F(x,y)=f(x-y)\chi_{[-1,1]}(y)$$
with $f=\chi_{[0,1)}$ defined on $\mathbb{R}$.
Then it is easy to check that $F\neq0$ only if $-1<x<2$, and
$$\|F\|_{L^p(L^r)}\leq 2^{1/r}3^{1/p}.$$
Hence
$$\|H\|_{L^p(L^r)\rightarrow L^p(L^r) }\geq 2^{-1/r}3^{-1/p}\|H(F)\|_{ L^p(L^r)}.$$
On the other hand, it is easy to check that
$$HF(x,y)=Hf(x-y)\chi_{[-1,1]}(y).$$
Therefore using the fact that $(0,1)\subset (x-1,x+1)$ whenever $x\in (0,1)$, we get
\begin{align*}
\|HF\|_{ L^p(L^r)}&\geq \left(\int^1_0\Big(\int^1_{-1}|Hf(x-y)|^rdy\Big)^{p/r}dx\right)^{1/p}\\
&=\left(\int^1_0\Big(\int^{x+1}_{x-1}|Hf(y)|^rdy\Big)^{p/r}dx\right)^{1/p}\\
&\geq \left(\int^1_0\Big(\int^{1}_{0}|Hf(y)|^rdy\Big)^{p/r}dx\right)^{1/p}\\
&=\|Hf\|_{L^r([0,1])}.
\end{align*}
Using the fact that Hilbert transform is a principle value, that is
$$Hf(x)=\lim_{\varepsilon\rightarrow 0}\int_{|x-y|>\varepsilon}\frac{1}{x-y}f(y)dy.$$
It is easy to conclude by the cancelation condition of the kernel $p.v.1/x$ that for any $0<x<1/2$,
$$Hf(x)=\int^1_{2x}\frac{1}{x-y}dy=\ln\frac{x}{1-x}.$$
Consequently,  for large $M>0$ and $0<x\leq e^{-M}$,
$$|Hf(x)|=\ln\frac{1-x}{x}\geq \frac{1}{2}\ln\frac{1}{x}\geq \frac{M}{2},$$
whence
\begin{align*}
\|Hf\|_{L^r([0,1])}&\geq \|Hf\|_{L^r([0,e^{-M}])}\geq \frac{M}{2}e^{-M/r}.
\end{align*}
Taking $M=r$, we conclude that
$$\|H\|_{L^p(L^r)\rightarrow L^p(L^r) }\geq 2^{-1/r}3^{-1/p}(2e)^{-1}r$$
which implies the desired estimate (\ref{lower bound of H}).
\end{proof}
{\bf Acknowledgements}. The author would like to express his great appreciation to Professor J. L. Torrea for the discussions related to the topics in this paper. Especially, Theorem \ref{main theorem 2} is suggested by him to the author.

\vskip30pt


\begin{thebibliography}{0}

\bibitem{BCT11} J. J. Betancor, R. Crescimbeni and J. L. Torrea, The $\rho$-variation of the heat semigroup in the hermite setting: Behaviour in $L^{\infty}$, Proc. Edinburgh Math. Soc. 54 (2011), 1-17.

\bibitem{BFR12} J. J. Betancor, J. C. Fari\~na, L. Rodrguez-Mesa, Hardy-Littlewood and UMD Banach lattices via Bessel convolution operators,  J. Operator Theory.,  67 (2012), no. 2, 349-368.

%\bibitem{Bou84} J. Bourgain, Extension of a result of Benedek, Calder\'on and Panzone, Ark. Mat. 22 (1984), 91-95.

\bibitem{CJRW03} J.T. Campbell, R.L. Jones, K. Reinhold, and M. Wierdl. Oscillation and variation for singular integrals in higher dimensions,  Trans. Amer. Math. Soc., 355: 2115-2137, 2003.

%\bibitem{GMT93} J. Garc\'ia-Cuerva,  R. Mac\'ias, J. L. Torrea, The Hardy-Littlewood property of Banach lattices, Israel J. Math. 83 (1993), no. 1-2, 177-201.

%\bibitem{GMT98} J. Garc\'ia-Cuerva,  R. Mac\'ias, J. L. Torrea, Maximal operators and B.M.O. for Banach lattices, Proc. Edinburgh Math. Soc. (2) 41 (1998), no. 3, 585-609.

\bibitem{HMST95} E. Harboure, R. A. Mac\'ias, C. Segovia, J. L. Torrea, Some estimates for maximal functions on K\"othe function spaces, Israel J. Math. 90 (1995), 349-371.

\bibitem{HoMa} G. Hong, T. Ma, Vector-valued $q$-variation for differential operators and semigroups I, Preprint.

\bibitem{HoMa2} G. Hong, T. Ma, Vector-valued $q$-variation for differential operators and analytic semigroups II, Preprint.

\bibitem{LiTz79} J. Lindenstrauss, L. Tzafriri, Classical Banach space II, Springer-Verlag Berlin Heidelberg New York, 1979.

\bibitem{Rub86} J.L. Rubio de Francia, Martingale and integral transforms of Banach space valued functions, Springer Lect. Notes in Math. 1221 (1986), 195-222.

\end{thebibliography}
\end{document}